\newtheorem{thm}{Theorem}[section]
\newtheorem*{thm*}{Theorem}
\newtheorem{lem}[thm]{Lemma}
\theoremstyle{definition}
\numberwithin{equation}{section}
\newcommand{\mbr}{\mathbb{R}}
\newcommand{\mcf}{\mathcal{F}}
\newcommand{\mcj}{\mathcal{J}}
\newcommand{\mcm}{\mathcal{M}}
\newcommand{\mme}{\mathrm{e}}
\newcommand{\M}{{\mathcal M}}
\DeclareMathOperator\dif{d\!}
\newcommand{\newabstract}[1]{%
	\par\bigskip
	\csname otherlanguage*\endcsname{#1}%
	\csname captions#1\endcsname
	\item[\hskip\labelsep\scshape\abstractname.]
}
\begin{document}

	\baselineskip=17pt

	\title[Large values of Dirichlet polynomials with multiplicative coefficients]{Large values of Dirichlet polynomials with multiplicative coefficients}

	\author{Zikang Dong\textsuperscript{1}}

	\author{Yutong Song\textsuperscript{2}}
        \author{Weijia Wang\textsuperscript{3}}
            \author{Hao Zhang\textsuperscript{4}}
    \author{Shengbo Zhao\textsuperscript{2}}
	\address{1.School of Mathematical Sciences, Soochow University, Suzhou 215006, P. R. China}
	\address{2.School of Mathematical Sciences, Key Laboratory of Intelligent Computing and Applications(Ministry of Education), Tongji University, Shanghai 200092, P. R. China}
    \address{3.Morningside Center of Mathematics, Academy of Mathematics and Systems Science, Chinese Academy of
		Sciences, Beijing 100190, P. R. China}
	\address{4.School of Mathematics, Hunan University, Changsha 410082, P. R. China}
	\email{zikangdong@gmail.com}
    \email{99yutongsong@gmail.com}
	\email{weijiawang@amss.ac.cn}
	\email{zhanghaomath@hnu.edu.cn}

	\email{shengbozhao@hotmail.com}
	\date{\today}
	
	\begin{abstract} 
		In this paper, we investigate large values of  Dirichlet polynomials with multiplicative coefficients $\sum_{n\le N}f(n)n^{it}$, where $1\ll t\le T$ for large $T$. We prove an improved Omega result in the region $\exp((\log T)^{\frac12+\varepsilon})\le N\le\sqrt T$, where $T$ is large. We also show an Omega result when $\log N$ is around $\sqrt{\log T\log_2T}$.
	\end{abstract}

	\subjclass[2020]{Primary 11L40, 11N25.}
	
	\maketitle
	
	\section{Introduction}

Dirichlet polynomials of the form
$$
\sum_{n\le N} f(n)n^{it},
$$
where $f$ is a completely multiplicative function, play a fundamental role in analytic number theory. In the simplest case $f(n)=1$, the Dirichlet polynomial reduces to the classical zeta sum $\sum_{n\le N} n^{it}$, which has been extensively studied since the early 20th century. If $x \leq cT$ for some sufficiently small constant $c>0$, then an application of Montgomery--Vaughan's mean value theorem yields 
$$
\frac{2}{T}\int_{T/2}^{T} \Big|\sum_{n \leq x} n^{it}\Big|^2 dt 
= \lfloor x \rfloor + O\!\left(\tfrac{x^2}{T}\right) \;\gg\; x.
$$
This type of asymptotic can in fact be extended to more general ranges and for higher moments. Discussion of available bounds in this direction can be found in \cite{BS16,BMT,Harper23,Helson} and the references therein.

The resonance method, whose origin can be traced back to ideas of Voronin~\cite{Vo}, was subsequently formulated in a rigorous form and independently refined by Soundararajan~\cite{Sound} and Hilberdink~\cite{HIL}. In \cite{Sound}, Soundararajan obtained an important bound in another type 
$$
\max _{T \leqslant t \leqslant 2 T}\left|\zeta\left(\frac{1}{2}+i t\right)\right| \geqslant \exp \left((1+o(1)) \sqrt{\frac{\log T}{\log \log T}}\right).
$$
Since then, the method has been systematically developed and extended, notably by Aistleitner~\cite{Ais} and Bondarenko–Seip~\cite{BS17} among others. By constructing large GCD sums, Aistleitner used a modified version of Soundararajan's resonance method to establish lower bounds for the maximum of $|\zeta(\sigma+i t)|$ when $\sigma\in(1/2,1)$ is fixed.  The resonance method now constitutes one of the principal analytic techniques for extreme value lower bounds for $\zeta$ and general classes of $L$-functions.

Improving upon Benatar--Nishry~\cite{BN} via the resonance method,  Xu and Yang~\cite{XY} recently proved that for $\delta, \gamma \in(0,1)$ be fixed and $T=N^{C(N)}$ with $2 / \delta \leqslant C(N) \leqslant(\log N)^\gamma$, it holds for sufficiently large $N$ that
$$
\max _{\frac{T}{2} \leqslant t \leqslant T}\left| \sum_{n \leqslant N} f(n) n^{i t}\right| \geqslant \sqrt{N}\exp \left(\sqrt{(1-\delta) \frac{\log T}{\log \log T}}\right),
$$
uniformly for all completely multiplicative functions $f$ with $|f(n)|=1$.

In this paper, we focus on large values in ranges of $N$ between $\exp((\log T)^{1/2+\varepsilon})$ and $\sqrt{T}$, as well as in transition regimes where $\log N = \sqrt{\log T\log_2T}(\log_2T)^{O(1)}$.  
    
\begin{thm}\label{thm1.1}
    Let $\log N=(\log T\log_2T)^{\frac12}\tau$ with $\tau=(\log_2T)^{O(1)}$. Let $f$  be any completely multiplicative function such that $|f(n)|=1$ for any integer $n$. Then we have
    $$\max_{1\leqslant t\leqslant T}\Big|\sum_{n\le N}f(n)n^{it}\Big|\ge \sqrt{N}\exp\bigg((1+o(1))A(\tau+\tau'){\sqrt{\frac{\log T}{\log_2 T}}}\bigg).$$
    where $A,\tau'\in\mbr$ such that 
   \[\tau=\int_A^\infty \frac{\mme^{-u}}{u}d u,\qquad \tau'=\int_A^\infty\frac{\mme^{-u}}{u^2}d u.\]
\end{thm}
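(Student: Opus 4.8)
The plan is to use the resonance method in the style of Soundararajan and Xu--Yang, but with a resonator tuned to the ``short'' range of $N$ under consideration. First I would construct a resonator $R(t)=\sum_{m\in\mcm} r(m) m^{it}$, where the support set $\mcm$ consists of squarefree integers composed only of primes from a dyadic-type window $(P_0,P_1]$ with $P_1$ a small power of $N$, and the coefficients $r(m)$ are multiplicative, essentially $r(p)\asymp L/(\sqrt p\log p)$ on that window (the precise choice of the prime window and of $L$ being dictated by the constraint $\log N=(\log T\log_2 T)^{1/2}\tau$). The key device is to study the quotient
\[
\frac{\int_{1}^{T}\bigl|\sum_{n\le N}f(n)n^{it}\bigr|\,|R(t)|^2\,\Phi(t/T)\,dt}{\int_{1}^{T}|R(t)|^2\,\Phi(t/T)\,dt},
\]
for a suitable smooth weight $\Phi$, and to show it is at least $\sqrt N\exp\bigl((1+o(1))A(\tau+\tau')\sqrt{\log T/\log_2 T}\bigr)$; this immediately gives the claimed lower bound for the maximum.

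The next step is the arithmetic heart: expanding the numerator, the main term comes from diagonal-type contributions where $n$ together with one resonator frequency nearly matches the other, i.e.\ from pairs $m_1=n m_2$ with $n\le N$. This reduces the problem to estimating
\[
\sum_{\substack{m_1,m_2\in\mcm\\ m_1/m_2=n\le N}} r(m_1)r(m_2)\, f(n)\sqrt n
\;=\;\sum_{\substack{m_2\in\mcm,\ n\le N\\ n m_2\in\mcm}} r(n m_2) r(m_2) f(n)\sqrt n,
\]
and, using multiplicativity of $r$ and $f$ together with $|f|=1$, to a clean Euler-product-type sum over the prime window. One then optimizes the ratio of this quantity to $\sum_{m\in\mcm} r(m)^2$: writing everything multiplicatively over primes $p\in(P_0,P_1]$, each prime contributes a factor of the shape $1+2r(p)^2\cdot$(something like $\sqrt p/$weight$)\,+\cdots$ in the numerator versus $1+r(p)^2$ in the denominator, and taking logarithms turns the optimization into an integral $\int \log(\sqrt p)\,d(\cdots)$ over the window. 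The variational calculation, balanced against the hard constraint that the typical element of $\mcm$ has size $\le N$ (which contributes the Lagrange-multiplier term), is exactly what produces the constants $A$, $\tau$, $\tau'$ with $\tau=\int_A^\infty \mme^{-u}u^{-1}du$ and $\tau'=\int_A^\infty \mme^{-u}u^{-2}du$; the $\tau'$ term arises precisely from the second-order expansion of $\log(1+2r(p)^2\sqrt p+\cdots)$ beyond the leading $\log$.

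The error terms to control are: (i) off-diagonal contributions where $m_1/m_2$ is not an integer $\le N$ or where the sum $\sum_{n\le N}f(n)n^{it}$ is estimated by its ``expected'' size only up to oscillation — here one uses that $|R(t)|^2$ is a nonnegative Dirichlet polynomial of length bounded by a small power of $T$ (so the window $\log N=(\log T\log_2 T)^{1/2}\tau\le T^{o(1)}$ keeps the total length $\ll T^{1-\delta}$), and applies a Montgomery--Vaughan mean-value estimate together with the trivial bound $|\sum_{n\le N}f(n)n^{it}|\le N$ on the complementary set; (ii) the passage from $\sum_{n\le N}f(n)n^{it}$ to the model sum, which requires the lower bound to survive after removing a sparse ``bad'' set of $t$, handled by a second-moment / Cauchy--Schwarz argument exactly as in Xu--Yang. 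I expect the main obstacle to be the variational optimization in the regime where $\tau$ is allowed to grow like $(\log_2 T)^{O(1)}$: one must track the second-order term $\tau'$ uniformly and verify that the prime window $(P_0,P_1]$ can simultaneously be chosen so that (a) the resonator length is $T^{o(1)}$, (b) the typical $\mcm$-element is genuinely $\le N$ with the stated $\log N$, and (c) the contribution of primes near the edges of the window does not swamp the gain — this is the delicate balancing that distinguishes the present result from the $C(N)\to\infty$ regime treated by Xu--Yang.
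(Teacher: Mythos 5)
Your high-level framework (resonance, moment quotient, diagonal dominance, saddle-point optimization giving $A,\tau,\tau'$) is the right one, and your intuition that the constants come from a Hough-type variational calculation is on target. But there are two substantive gaps.

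First, and most seriously, your resonator $R(t)=\sum_{m\in\mathcal{M}} r(m)m^{it}$ omits the $f$-twist. The paper takes $R(t)=\sum_{m}f(m)r(m)m^{it}$. This is not cosmetic: when you expand the numerator $\int S_t(N)|R(t)|^2\Phi\,dt$, the diagonal terms $km=n$ carry the phase $f(k)f(m)\overline{f(n)}$, which collapses to $|f(n)|^2=1$ \emph{only because} the resonator carries the $f$-factor. With your untwisted resonator the diagonal contribution is $\sum_{k\le N} f(k)r(k)\sum_{m}r(m)^2$, and for an adversarial unimodular $f$ the sum $\sum_{k\le N} f(k)r(k)$ can be made small by cancellation. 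You cannot then appeal to ``$|f|=1$'' to restore positivity; the uniformity over all $f$ in the theorem would be lost. (Your stray factor $\sqrt n$ and $f(n)$ in the displayed diagonal sum also suggest this point was not pinned down.)

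Second, the variational constraint is misidentified. You balance against ``the hard constraint that the typical element of $\mathcal{M}$ has size $\le N$,'' with $P_1$ a small power of $N$. In the paper the resonator is supported on all integers $\le x:=T/N$, with completely multiplicative $r(p)=\lambda/(\sqrt p\log p)$ on the window $\lambda^2\le p\le\exp((\log\lambda)^2)$, where $\lambda=\sqrt{\log x\log_2 x}$; note $x=T/N$ is enormously larger than $N$ in this regime, and the prime window top $\exp((\log\lambda)^2)$ is far smaller than any fixed power of $N$. After the $\sum_m r(m)^2$ factor cancels between numerator and denominator, the quotient reduces to $\sum_{k\le N} r(k)$, and it is the saddle-point evaluation of this \emph{partial} sum (constrained by the length $N$ of the target polynomial, not by the resonator support) that produces $A$, $\tau$, $\tau'$ as in Hough, pp.~105--107. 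Finally, the off-diagonal is controlled entirely by the rapid decay of $\widehat\Phi$ (one needs only that $\widehat\Phi(cN/\log T)$ is negligible), so no Montgomery--Vaughan input or ``bad set'' removal is required; introducing them would be both unnecessary and harder to make uniform in $f$.
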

In particular, if we choose $f(\cdot)=1$ always, then Theorem \ref{thm1.1} overcomes Theorem 1.6 of \cite{DWZ}.

For any $c>0$, we define a subset of the above completely multiplicative functions.
\begin{equation*}
    \mcf(c) \coloneqq \big\{f\;{\rm completely\; multiplicative}:|f(n)|=1, \forall n \in \mathbb{N};\, {\rm{Re}}f(n)\overline{f(m})\ge c,\;\forall \,m,n \big\}.
\end{equation*}

Combining with GCD sum estimates, we deduce the following $\Omega$-type lower bounds for Dirichlet polynomials with coefficients in $\mcf(c)$.

\begin{thm}\label{thm1.2}
Fix $\delta \in (0,\frac{1}{100})$ and $c>0$. Let $T$ be a sufficiently large number and $N$ be any positive number satisfying $\exp((\log T)^{\frac{1}{2}+\delta}) \le N \le T^{\frac{1}{2}}$. Then we have
    $$ \max_{1\leqslant t\leqslant T}\Big|\sum_{n\le N}f(n)n^{it}\Big|\ge \sqrt{N}\exp\bigg((\sqrt2+o(1)){\sqrt{\frac{\log (T/N)\log_3(T/N)}{\log_2 (T/N)}}}\bigg)$$ 
    for all $f \in \mcf(c)$.
\end{thm}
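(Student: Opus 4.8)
The plan is to combine the resonance-type mechanism of Theorem \ref{thm1.1} with known lower bounds for GCD sums, in the spirit of Aistleitner's approach to extreme values of $\zeta$ on vertical lines. First I would set $M = T/N$, so that $\log M \asymp \log T$ in the range under consideration, and observe that $\exp((\log M)^{1/2+\delta'})\le N$ remains valid for a slightly smaller $\delta'$; the target bound is then phrased entirely in terms of $M$. The key point is that for $f\in\mcf(c)$ the correlation condition $\operatorname{Re} f(n)\overline{f(m)}\ge c$ forces the relevant quadratic form appearing in the mean-value/resonance analysis to be bounded below by $c$ times a GCD sum $\sum_{m,n}\frac{(m,n)}{\sqrt{mn}}\, r(m)r(n)$ over the support of the resonator, rather than vanishing due to oscillation. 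So the structure is: choose a resonator $R(t)=\sum_{k} r(k) k^{it}$ supported on a carefully chosen multiplicative set $\mathcal{A}$ of integers up to $M$, write
\[
\int_{1}^{T} \Big|\sum_{n\le N} f(n) n^{it}\Big|^2 |R(t)|^2 \, \Phi(t/T)\, dt,
\]
expand, and extract from the diagonal and near-diagonal terms a main contribution of size (GCD sum) $\times$ (typical size of $\sum_{n\le N}f(n)n^{it}$, which is $\sqrt N$).

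The second step is to import the GCD sum lower bound. By the work of Bondarenko--Seip (and its refinements), there is a set $\mathcal{A}\subset[1,M]$ of integers, of polynomial size in terms of the relevant parameters, for which
\[
\sum_{m,n\in\mathcal{A}} \frac{(m,n)}{[m,n]^{1/2}}\, \Big/\, \sum_{n\in\mathcal{A}} 1 \;\ge\; \exp\Big((2\sqrt 2+o(1))\sqrt{\frac{\log M\,\log_3 M}{\log_2 M}}\Big),
\]
where the constant $\sqrt 2$ in the exponent (before squaring it becomes the $2\sqrt2$ here) is exactly the one that produces the $\sqrt2$ in the statement after taking square roots of the resonance ratio. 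I would take the resonator coefficients $r(k)$ to be the indicator of $\mathcal{A}$ (or a mild smoothing thereof), so that $R(t)=\sum_{k\in\mathcal{A}}k^{it}$, and the ratio of the weighted second moment to $\int |R|^2\Phi$ reproduces this GCD ratio up to the factor $c$ and lower-order losses. Combined with the lower bound $\frac{1}{T}\int |\sum_{n\le N}f(n)n^{it}|^2\,\Phi(t/T)\,dt \gg N$ (valid since $N\le\sqrt T$, via Montgomery--Vaughan), a standard averaging argument — the maximum of a nonnegative function is at least its $|R|^2\Phi$-weighted average — yields a point $t_0\in[1,T]$ with $|\sum_{n\le N}f(n)n^{it_0}|^2 \gg c\, N\cdot(\text{GCD ratio})$, which is the claimed bound after taking square roots.

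The main technical obstacle, and the place where care is genuinely needed, is the error-term analysis in expanding the weighted integral: the off-diagonal terms (those with $mk \ne nk'$ for $k,k'\in\mathcal{A}$, $m,n\le N$) must be shown to be negligible compared to the diagonal GCD contribution. This requires that the length $N$ of the Dirichlet polynomial times the largest element of $\mathcal{A}$ stays below a suitable power of $T$ — which is precisely why the hypothesis $N\le T^{1/2}$ and the lower bound $N\ge\exp((\log T)^{1/2+\delta})$ enter: the former controls the total conductor $N\cdot\max\mathcal{A}$ against $T$, and the latter guarantees that $\log N$ dominates the length scale $\sqrt{\log M\log_2 M}$ of the optimal GCD-sum resonator, so that the set $\mathcal A$ can be taken with all elements $\le N$ while still achieving the extremal ratio. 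I expect the cleanest route is to use a smooth weight $\Phi$ with compactly supported Fourier transform so that the off-diagonal sum is truncated to $|\log(mk/nk')|\ll 1/T$, reducing it to a divisor-type count that is easily absorbed. Once this is in place, optimizing $\delta'$ and tracking the $o(1)$ through the GCD estimate gives the stated constant $\sqrt2$.
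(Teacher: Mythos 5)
Your high-level strategy is the right one — weight the second moment of $S_t(N)$ by $|R(t)|^2\Phi$, use the positivity $\operatorname{Re}f(am)\overline{f(bn)}\ge c$ to reduce the cross terms to a GCD sum, and invoke a Bondarenko--Seip/Bondarenko--Tenenbaum extremal GCD estimate. But there is a structural gap in how you set up the resonator and import the GCD bound.

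You take $\mathcal{A}\subset[1,M]$ with $M=T/N$ (and later ask for elements $\le N$), and then ask for the GCD ratio $\exp\big((2\sqrt 2+o(1))\sqrt{\tfrac{\log M\log_3 M}{\log_2 M}}\big)$. The paper instead takes $\mcm$ to be a set of \emph{cardinality} $K=\lfloor T/N\rfloor$ that is \emph{narrow} ($\max\mcm\le 2\min\mcm$) as in Lemma~\ref{lemmaGCD}, with no upper bound on the size of its elements; the GCD bound is in terms of $\log K$, not $\log(\max\mcm)$. These are genuinely different. The extremal narrow sets of cardinality $K$ consist of $y$-smooth numbers with $y=(\log K)^{1+o(1)}$, and since there are far fewer than $K$ such numbers below $K$ (or below $M$, or below $N$), any such set must live in a dyadic window whose endpoints are \emph{much larger} than $K$, $M$, or $N$. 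Consequently, a set constrained to $[1,M]$ cannot achieve the exponent $2\sqrt2$, and the "optimal GCD-sum resonator fits below $N$" heuristic in your last paragraph does not hold.

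Because the resonator frequencies $m\in\mcm$ can be enormous, the off-diagonal control you sketch (truncating to $|\log(mk/nk')|\ll 1/T$) breaks: distinct $m,n$ in the same narrow window can have $|\log(m/n)|$ arbitrarily small. This is precisely why the paper inserts the discretisation step — replacing $\mcm$ by $\mcm'=\{m_j=\min\mcm_j\}$ with $\mcm_j$ the intersection of $\mcm$ with intervals of multiplicative length $1+\tfrac{\log T}{T}$, and reweighting by $r(m_j)=|\mcm_j|^{1/2}$ — so that $R(t)$ has $\ll T/\log T$ well-spaced frequencies while $\sum r(m)^2=|\mcm|$ is preserved. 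You also skip the combinatorial bridge between the diagonal of $I_2$ and the GCD sum: the paper lower-bounds $\#\{a,b\le N: am=bn\}$ by $N/\max\!\big(\tfrac{m}{(m,n)},\tfrac{n}{(m,n)}\big)\ge \tfrac{N}{\sqrt 2}\sqrt{(m,n)/[m,n]}$ via AM--GM, and then shows the resulting truncation $[m,n]/(m,n)\le N^2/2$ removes only a negligible part of the GCD sum — this is the actual role of the hypothesis $N\ge\exp((\log T)^{1/2+\delta})$, together with the smoothness $y_\mcm\le(\log K)^{1+o(1)}$ of the BT extremal set, not the "elements of $\mathcal A$ fit below $N$" mechanism you propose. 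Without the discretisation and the AM--GM/truncation step, the argument does not close.
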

This improves the result of Xu and Yang \cite{XY} at the cost of an additional request for $f(\cdot)$ and a wider range of $t$.
\section{Proof of Theorem \ref{thm1.1}}
Let $x=T/N$ and $\lambda=\sqrt{\log x\log_2x}$. We  define the completely multiplicative function $r(n)$ by $$r(p)=\begin{cases}
   \frac{\lambda}{\sqrt p \log p}, &  \lambda^2\le p\le \exp((\log\lambda)^2),\\
   0, & {\rm otherwise.}
   \end{cases}$$ We define the resonator $R(t)$ by 
\[R(t):=\sum_{n\leq x}f(n)r(n)n^{it}.\]
Let  $\Phi(t) := e^{-t^2 /2}$. We have the Fourier transform $\widehat{\Phi}(x) = \sqrt{2\pi}\Phi(x) > 0$.
Write $S_t(N) \coloneqq S_{f,t}(N) = \sum_{n \le N}f(n)n^{\mathrm{i}t}
$.
Define 
$$M_1(R,T):=\frac{\log T}{T}\int_{1\le|t|\le T} |R(t)|^2 \Phi\Big( \frac{t \log T}{T} \Big)\dif t,$$
and
$$M_2(R,T):=\frac{\log T}{T}\int_{1\le|t|\le T} S_t(N)|R(t)|^2 \Phi\Big( \frac{t \log T}{T} \Big)\dif t.$$
Then we have 
\[\max_{t\in [1,T]}|S_t(N)|\geq \frac{|M_2(R,T)|}{M_(R,T)}.\]

For $M_1$ we have
\begin{align*}M_1(R,T)&\le I_1(R,T)\\&:=\frac{\log T}{T}\int_{\mathbb R} |R(t)|^2 \Phi\Big( \frac{t \log T}{T} \Big)\dif t\\&=\sum_{m,n\le x}f(m)\overline f(n)r(m)r(n)\widehat\Phi\Big(\frac{T}{\log T}\log\frac mn\Big)\\
&=\sum_{m\le x}r(m)^2+\sum_{m,n\le x\atop m\neq n}f(m)\overline f(n)r(m)r(n)\widehat\Phi\Big(\frac{T}{\log T}\log\frac mn\Big).\end{align*}
When $m\neq n$, since $m,n\le x$, we have for some $c>0$
$$\big|\log \frac mn\big|\ge\log\big(1+\frac 1x\big)\ge\frac cx.$$
So \begin{align*}\widehat\Phi\Big(\frac{T}{\log T}\log\frac mn\Big)\le\widehat\Phi\Big(\frac{T}{\log T}\frac  cx\Big)=\widehat\Phi\Big(\frac{T}{\log T}\frac {cN}T\Big)=\widehat\Phi\Big(\frac{cN}{\log T}\Big).\end{align*}
Thus we have
\begin{align*}&\sum_{m,n\le x\atop m\neq n}f(m)\overline f(n)r(m)r(n)\widehat\Phi\Big(\frac{T}{\log T}\log\frac mn\Big)\\&\le\widehat\Phi\Big(\frac{cN}{\log T}\Big)\sum_{m,n\le x}r(m)r(n)\\
&\le x^2\widehat\Phi\Big(\frac{cN}{\log T}\Big)\sum_{m\le x}r(m)^2\\
&=o\Big(\sum_{m\le x}r(m)^2\Big).\end{align*}
So we finally have for $M_1$
$$M_1(R,T)\le(1+o(1))\sum_{m\le x}r(m)^2.$$

For $M_2$, firstly we have
\begin{align*}\int_{|t|\le 1} S_t(N)|R(t)|^2 \Phi\Big( \frac{t \log T}{T} \Big)\dif t&\le\sum_{k\le N}\sum_{m,n\le x}r(m)r(n)\\&\le Nx\sum_{m\le x}r(m)^2\\&=T\sum_{m\le x}r(m)^2,\end{align*}
and 
$$\int_{|t|\ge T} S_t(N)|R(t)|^2 \Phi\Big( \frac{t \log T}{T} \Big)\dif t\ll1.$$
So 
$$M_2(R,T)=I_2(R,T)+O\Big(\log T\sum_{m\le x}r(m)^2\Big),$$
where
$$I_2(R,T):=\int_{\mathbb R} S_t(N)|R(t)|^2 \Phi\Big( \frac{t \log T}{T} \Big)\dif t.$$
Expanding it, we have
\begin{align*}
 I_2(R,T)&=\sum_{k\le N}\sum_{m,n\le x}f(km)\overline f(n)r(m)r(n) \widehat\Phi\Big(\frac{T}{\log T}\frac{km}{n}\Big)\\
 &=\sum_{k\le N}\sum_{m,n\le x\atop km=n}r(m)r(n)+\sum_{k\le N}\sum_{m,n\le x\atop km\neq n}f(km)\overline f(n)r(m)r(n) \widehat\Phi\Big(\frac{T}{\log T}\frac{km}{n}\Big)\\
 &=\sum_{k\le N}\sum_{m\le x}r(m)r(km)+\sum_{k\le N}\sum_{m,n\le x\atop km\neq n}f(km)\overline f(n)r(m)r(n) \widehat\Phi\Big(\frac{T}{\log T}\frac{km}{n}\Big).  
\end{align*}
Similarly to our treatment for $I_1(R,T)$, we  have
$$I_2(R,T)=(1+o(1))\sum_{k\le N}\sum_{m\le x}r(m)r(km)=(1+o(1))\sum_{k\le N}r(k)\sum_{m\le x/k}r(m)^2.$$
For the inner sum, following the step of Hough's work \cite[p. 105]{Hou}, we have
$$\sum_{m\le x/k}r(m)^2=(1+o(1))\sum_{m\ge1}r(m)^2\ge(1+o(1))\sum_{m\le x}r(m)^2.$$
Now we obtain 
$$\max_{1\le t\le T}|S_t(N)|\ge \frac{M_2(R,T)}{M_1(R,T)}\ge\frac{I_2(R,T)}{I_1(R,T)}+O(\log T)\ge(1+o(1)){\sum_{k\le N}r(k)}+O(\log T).$$
Now Theorem \ref{thm1.1} follows from  \cite[p.105--107]{Hou} immediately.

\section{Proof of Theorem \ref{thm1.2}}

Firstly, we need a lemma that maximizes the GCD sum.
\begin{lem}\label{lemmaGCD}
    If $\mcm$ is any set of positive integers satisfying $\max \mcm \le 2 \min \mcm$ and $|\mcm| = K$ is large, then
    $$
    \max_{|\mcm| = K}\frac{1}{|\mcm|} \sum_{m, n \in \mcm}\sqrt{\frac{(m,n)}{[m,n]}} = \exp \bigg( (2\sqrt{2}+o(1))\sqrt{\frac{\log K \log_3 K}{\log_2 K
    }} \bigg).
    $$
\end{lem}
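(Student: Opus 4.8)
\emph{Reduction and strategy.} Since $[m,n]=mn/(m,n)$ we have $\sqrt{(m,n)/[m,n]}=(m,n)(mn)^{-1/2}$, so the quantity to be maximised is the Gál-type GCD sum $|\mcm|^{-1}\sum_{m,n\in\mcm}(m,n)(mn)^{-1/2}$, taken over sets $\mcm$ confined to a single dyadic block $[\min\mcm,2\min\mcm]$. The plan is to prove matching lower and upper bounds of size $\exp\!\big((2\sqrt2+o(1))\sqrt{\log K\log_3K/\log_2K}\big)$; the construction is the heart of the matter, while the upper bound either follows from, or is a mild adaptation of, the resonance-method GCD-sum estimates of Bondarenko--Seip~\cite{BS17} and their refinements.

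\emph{Lower bound.} I would build $\mcm$ multiplicatively. Fix a large parameter $y$, a set $\mcp$ of primes from a suitably chosen range near $y$ with $|\mcp|=P$, and a positive integer $c$, and consider the $\binom Pc$ squarefree integers that are products of exactly $c$ distinct primes of $\mcp$. Each lies in $[(\min\mcp)^c,(\max\mcp)^c]$, so a pigeonhole over $O(c\log y)$ dyadic blocks yields a block $[N,2N]$ containing at least a $(\log K)^{-O(1)}$ proportion of them; let $\mcm$ be that sub-family. By construction $\max\mcm\le2\min\mcm$, the pigeonhole loss is $o(1)$ in the exponent, and $\log K=(1+o(1))\log\binom Pc$. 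Writing $S_m\subseteq\mcp$ for the prime support of $m$, one has the identity $(m,n)(mn)^{-1/2}=\prod_{p\in S_m\triangle S_n}p^{-1/2}$; organising the double sum by $j:=c-|S_m\cap S_n|$, bounding the number of $n\in\mcm$ at distance $j$ from a given $m$ from below by $\binom cj\binom{P-c}{j}$ times the proportion of such swaps that keep the product in $[N,2N]$, and bounding $\prod_{p\in S_m\triangle S_n}p^{-1/2}$ from below by $(\max\mcp)^{-j}$, one obtains
$$\frac1{|\mcm|}\sum_{m,n\in\mcm}\frac{(m,n)}{\sqrt{mn}}\ \gg\ \sum_{j\ge0}\frac{\Theta^{\,j}}{(j!)^2}\ \asymp\ \exp\!\big(2\sqrt\Theta\big)$$
for an explicit $\Theta=\Theta(y,P,c)$. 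It then remains to optimise $y,P,c$ subject to $\log\binom Pc=(1+o(1))\log K$ so that $2\sqrt\Theta=(2\sqrt2-o(1))\sqrt{\log K\log_3K/\log_2K}$.

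\emph{Upper bound.} Here the tool is the identity $\sum_{m,n\in\mcm}(m,n)(mn)^{-1/2}=\sum_{d\ge1}\varphi(d)\big(\sum_{n\in\mcm,\,d\mid n}n^{-1/2}\big)^2$. The cleanest route is to observe that dropping the restriction $\max\mcm\le2\min\mcm$ only enlarges the maximum, so the required bound follows from the known upper bound for the unconstrained Gál sum (Bondarenko--Seip~\cite{BS17} and the sharp-constant refinement of de la Bret\`eche and Tenenbaum); alternatively one reruns that argument directly, using the dyadic localisation of $\mcm$ to bound $\#\{n\in\mcm:d\mid n\}$ by $\min(K,2N/d)$, a dyadic split in $d$, and an entropy estimate.

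\emph{Main obstacle.} The difficulty is the constant $2\sqrt2$ itself---getting the leading term exactly rather than only the correct order of magnitude. On the construction side this forces a delicate balance between the size of the primes, the number $c$ of prime factors, and the cardinality $K$, so that none of the three losses (the pigeonhole into a dyadic block, replacing the primes of $S_m\triangle S_n$ by the largest one, and the ``stays in the block'' proportion) costs more than $1+o(1)$ in the exponent; on the upper-bound side it requires the sharp entropy optimisation. Verifying that the extra constraint $\max\mcm\le2\min\mcm$ genuinely does not change the leading constant is where the real work lies.
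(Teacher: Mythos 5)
The paper does not actually prove this lemma: its ``proof'' is the single sentence ``This is Theorem 1.1 of \cite{BT}'', i.e.\ a citation to de la Bret\`eche and Tenenbaum. You are therefore not matching the paper's argument but attempting to reconstruct the cited theorem from scratch, which is a genuinely different (and far more ambitious) route. Your outline is in the right spirit and does reflect the structure of the de la Bret\`eche--Tenenbaum argument: the lower bound is built from squarefree products of a carefully chosen prime window, with the symmetric-difference identity $(m,n)(mn)^{-1/2}=\prod_{p\in S_m\triangle S_n}p^{-1/2}$ driving a Bessel-type sum $\sum_j\Theta^j/(j!)^2\asymp\exp(2\sqrt\Theta)$; the upper bound goes through $\sum_{m,n}(m,n)(mn)^{-1/2}=\sum_d\varphi(d)\big(\sum_{n\in\mcm,\,d\mid n}n^{-1/2}\big)^2$ and an entropy estimate. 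Your observation that the constraint $\max\mcm\le 2\min\mcm$ is harmless for the upper bound (dropping it only enlarges the supremum) and can be imposed on the lower-bound construction by pigeonholing into $O(c\log y)$ dyadic blocks is also correct.

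That said, as it stands the proposal has a genuine gap precisely at the point the lemma is about: the exact constant $2\sqrt2$. You never specify $\Theta(y,P,c)$, never carry out the optimisation over $(y,P,c)$ subject to $\log\binom{P}{c}=(1+o(1))\log K$, and never perform the sharp-constant entropy estimate for the upper bound; you yourself flag these as ``where the real work lies''. Without those computations the argument only yields $\exp\big(C\sqrt{\log K\log_3K/\log_2K}\big)$ for some undetermined $C$, which is not the statement being proved. Moreover, the crude bound $\prod_{p\in S_m\triangle S_n}p^{-1/2}\ge(\max\mcp)^{-j}$ is only affordable if the prime window is short enough that $\max\mcp/\min\mcp=1+o(1)$ in the relevant scale; making that loss $1+o(1)$ in the exponent is itself part of the optimisation you have deferred. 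For this paper the one-line citation is the right move; if you want a self-contained proof, the quantitative details above must actually be supplied, not merely identified.
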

\begin{proof}
    This is Theorem 1.1 of \cite{BT}.
\end{proof}

    Note that in the proof of the above lemma, the choice for the set $\M$ satisfies $y_\M:=\max_{m\in\M}P(m)\le (\log K)^{1+o(1)}$.
    
Let $\mcm$ be a set of positive integers satisfying the conditions in Lemma \ref{lemmaGCD} with cardinality $|\mcm|  = \lfloor T/N \rfloor$.
For $j \ge 0$, define
$$
\mcm_j \coloneqq \Big[ \Big(1 + \frac{\log T}{T}\Big)^j, \Big(1 + \frac{\log T}{T}\Big)^{j+1} \Big) \bigcap \mcm.
$$
Let $\mcj$ be the set of integers $j$ such that $\mcm_j \neq \varnothing$, and let 
$$
\mcm^{\prime} \coloneqq \{ m_j = \min \mcm_j : j \in \mcj \}.
$$
Then we define the resonator as follows
$$
R(t) \coloneqq \sum_{m \in \mcm^{\prime}} f(m)r(m)m^{- \mathrm{i}t} ,
$$
where $r: \mcm^{\prime} \to \mathbb{R}^+$ and its value at $m_j$ is $r(m_j) = |\mcm_j|^{1/2}$. Plainly, the Cauchy-Schwarz inequality implies that
\begin{equation}\label{RupperboundN}
    |R(t)| \le \sum_{m \in \mcm^{\prime}}r(m) \le \Big( \sum_{m \in \mcm^{\prime}} 1 \Big)^{\frac{1}{2}} \Big( \sum_{m \in \mcm^{\prime}} r(m)^2 \Big)^{\frac{1}{2}} \leq |\mcm| .
\end{equation}
\par
Let  $\Phi(t) \coloneqq e^{-t^2 /2}$. We have the Fourier transform $\widehat{\Phi}(x) = \sqrt{2\pi}\Phi(x) > 0$. Define the following two moments
$$
 M_1(R,T) \coloneqq \int_{1 \le |t| \le T} |R(t)|^2 \Phi\Big( \frac{t \log T}{T} \Big) \mathrm{d}t
$$
and
$$
 M_2(R,T) \coloneqq \int_{1 \le |t| \le T} |S_t(x)|^2|R(t)|^2 \Phi\Big( \frac{t \log T}{T} \Big) \mathrm{d}t,
$$
where $S_t(N) \coloneqq S_{f,t}(N) = \sum_{n \le N}f(n)n^{\mathrm{i}t}$.

Obviously we have
\begin{equation}\label{max1}
    \max_{1 \le |t| \le T} |S_t(N)|^2 \geq \frac{M_2(R, T)}{M_1(R, T)}.
\end{equation}
\par
For $M_1(R,T)$, we have the following bound
\begin{equation}
    \label{I1upperbouund}
    M_1(R,T)\le I_1(R,T) \coloneqq \int_{\mathbb{R}} |R(t)|^2 \Phi\left( \frac{t \log T}{T} \right) \mathrm{d}t \ll \frac{T |\mcm|}{\log T}.
\end{equation}
The proof for the last inequality is similar to \cite[Lemma 5]{BS2}. For the sake of completeness, we present the detailed process below. To see this, we note that 
$$
I_1(R,T) = \sqrt{2\pi}\frac{T}{\log T}\sum_{m,n \in \mcm^{\prime}}f(m)\overline{f(n)}r(m)r(n)\Phi\Big( \frac{T}{\log T}\log \frac{m}{n}\Big).
$$
For $m=n$, employing the definition of $r(\cdot)$ and the fact that $|f(n)|=1$ we have that
$$
\sum_{m \in \mcm^{\prime}}r(m)^2 = \sum_{j \in \mcj}r(m_j)^2 = \sum_{j \in \mcj}|\mcm_j| = |\mcm|.
$$
For $m \neq n$ we have
\begin{align*}
   &\sum_{\substack{m,n \in \mcm^{\prime} \\ m \neq n}}  f(m)\overline{f(n)}r(m)r(n)\Phi\Big( \frac{T}{\log T}\log \frac{m}{n}\Big)  \\
 & \le\sum_{\substack{m,n \in \mcm^{\prime} \\ m \neq n}}  r(m)r(n)\Phi\Big( \frac{T}{\log T}\log \frac{m}{n}\Big) 
\end{align*}
By the definition of $\Phi(t)$ an $\M'$,  the above is
\begin{align*}
      & \le\sum_{\substack{j,k \in \mcj \\ j \neq k}}r(m_j)r(m_k)\Phi\Big(\frac{T}{\log T}(|j-k|-1)\log \Big(1+\frac{\log T}{T} \Big) \Big) \\
    &\le \sum_{\substack{j,k \in \mcj \\ j \neq k}}r(m_j)r(m_k)\Phi(|j-k|-1) .
\end{align*}
Since $\Phi(t)$ decays rapidly we have this is
\begin{align*}
   &\ll \sum_{\ell \ge1}\Phi(\ell-1) \sum_{\substack{j,k \in \mcj \\ |j - k| = \ell}} r(m_j)r(m_k)\\
     &\ll\sum_{\ell \ge1}\Phi(\ell-1) \sum_{j \in \mcj } r(m_j)r(m_{j+\ell})\\
     &\ll \sum_{j \in \mcj } r(m_j)^2\sum_{\ell \ge1}\Phi(\ell-1)\\
    &\ll|\M|.
\end{align*}
Thus we get \eqref{I1upperbouund}.
\par

Now we focus on $M_2(R,T)$. We have the following estimate by the rapid decay of $\Phi(t)$, we have
\begin{equation}\label{tlargeupper}
    \int_{|t| > T} |S_t(x)|^2 |R(t)|^2 \Phi\Big( \frac{t \log T}{T} \Big) \mathrm{d}t \ll 1.
\end{equation}
Furthermore, \eqref{RupperboundN} yields
\begin{equation}\label{tsmallupper}
    \int_{|t| < 1} |S_t(N)|^2 |R(t)|^2 \Phi\Big( \frac{t \log T}{T} \Big) \mathrm{d}t \le N^2 \Big|\sum_{m\in\M'}r(m)\Big|^2 \leq  N^2|\M'||\M| \leq T^2.
\end{equation}
Combining \eqref{tlargeupper} and \eqref{tsmallupper}, we have 
\begin{equation}\label{I2equation}
    I_2(R, T) \coloneqq \int_{\mathbb{R}} |S_t(N)|^2 |R(t)|^2 \Phi\Big( \frac{t \log T}{T} \Big) \mathrm{d}t =M_2(R, T) + O(T^2).
\end{equation}
Plugging \eqref{I1upperbouund} and \eqref{I2equation} into \eqref{max1}, we obtain
\begin{equation}\label{max2}
    \max_{1 \le |t| \le T} |S_t(N)|^2 \geq \frac{I_2(R, T)+O(T^2)}{I_1(R, T)}  \gg \frac{\log T}{T |\mcm|}I_2(R, T)+O(N\log T).
\end{equation}
\par
To deal with $ I_2(R, T)$, we expand it according to the definitions of $R(t)$ and $S_t(N)$ and obtain 
\begin{align*}
    I_2(R, T) = \, & \sum_{a,b \leq N}\sum_{m, n \in \mcm^{\prime}} f(am)\overline{f(bn)} r(m)r(n) \int_{\mathbb{R}} \Big( \frac{ma}{nb} \Big)^{-\mathrm{i}t} \Phi\Big( \frac{t \log T}{T} \Big) \mathrm{d}t \nonumber \\
    = \, & \frac{T}{\log T} \sum_{a,b \leq N}  \sum_{m, n \in \mcm^{\prime}} f(am)\overline{f(bn)} r(m) r(n) \widehat{\Phi}\Big( \frac{T}{\log T} \log \frac{ma}{nb} \Big). 
\end{align*}
We split the sum into two parts: the case $a m = b n$ and the case $a m \neq b n$,
\begin{align}\label{I2eq}
    I_2(R, T) = \, & \frac{T}{\log T} \sum_{a,b \leq N} \Big(\sum_{\substack{m, n \in \mcm^{\prime} \\ am=bn}} + \sum_{\substack{m, n \in \mcm^{\prime} \\ am \neq bn}} \Big)f(am)\overline{f(bn)} r(m) r(n) \widehat{\Phi}\Big( \frac{T}{\log T} \log \frac{ma}{nb} \Big) \nonumber \\
          = \, & \frac{T}{\log T}\widehat{\Phi}(0)\sum_{a,b \leq N}\sum_{\substack{m, n \in \mcm^{\prime} \\ am=bn}}r(m)r(n) \nonumber \\
          & + \, \frac{2T}{\log T} {\rm Re}\sum_{a,b \leq N}\sum_{\substack{m, n \in \mcm^{\prime} \\ am < bn}} f(am)\overline{f(bn)} r(m) r(n) \widehat{\Phi}\Big( \frac{T}{\log T} \log \frac{ma}{nb} \Big) \nonumber \\
          \,\ge & \frac{T}{\log T}\widehat{\Phi}(0)\sum_{a,b \leq x}\sum_{\substack{m, n \in \mcm^{\prime} \\ am=bn}}r(m)r(n) \nonumber \\
          & + \, c\frac{2T}{\log T} \sum_{a,b \leq N}\sum_{\substack{m, n \in \mcm^{\prime} \\ am < bn}}  r(m) r(n) \widehat{\Phi}\Big( \frac{T}{\log T} \log \frac{ma}{nb} \Big) \nonumber \\          
          \gg \, & \frac{T}{\log T}\sum_{a,b \leq N}\sum_{m, n \in \mcm^{\prime} }r(m)r(n)\widehat{\Phi}\Big( \frac{T}{\log T} \log \frac{ma}{nb} \Big).
\end{align}

For any fixed  $a,b\le N$, we have for the inner sum
\begin{align*}
&\sum_{m, n \in \mcm^{\prime} }r(m)r(n)\widehat{\Phi}\Big( \frac{T}{\log T} \log \frac{ma}{nb} \Big)\\
&=\sum_{j, k\in \mcj }r(m_j)r(m_k)\widehat{\Phi}\Big( \frac{T}{\log T} \log \frac{m_ja}{m_kb} \Big)\\
&\ge\sum_{j, k\in \mcj  }\min\{r(m_j)^2,r(m_k)^2\}\widehat{\Phi}\Big( \frac{T}{\log T} \log \frac{m_ja}{m_kb} \Big)\\
&\ge\sum_{j, k\in \mcj }\sum_{n_j\in\M_j,n_k\in\M_k\atop an_j=bn_k}\widehat{\Phi}\Big( \frac{T}{\log T} \log \frac{m_ja}{m_kb} \Big).
\end{align*}
By the definition of $\M_j$ and $\M_k$, we have 
$$\widehat{\Phi}\Big( \frac{T}{\log T} \log \frac{m_ja}{m_kb} \Big)\gg\widehat{\Phi}\Big( \frac{T}{\log T} \log \frac{n_ja}{n_kb} \Big).$$
So the above  is
\begin{align*}
&\gg\sum_{j, k\in \mcj }\sum_{n_j\in\M_j,n_k\in\M_k\atop an_j=bn_k}\widehat{\Phi}\Big( \frac{T}{\log T} \log \frac{n_ja}{n_kb} \Big)\\
&=\sum_{j, k\in \mcj }\sum_{n_j\in\M_j,n_k\in\M_k\atop an_j=bn_k}1\\
&=\sum_{m,n\in\M\atop am=bn}1.
\end{align*}
Back to \eqref{I2eq}, we have
\begin{align*}
    I_2(R, T)&\gg \frac{T}{\log T}\sum_{a,b \leq N}\sum_{m,n\in\M\atop am=bn}1\\
    &=\frac{T}{\log T}\sum_{m,n\in\M}\sum_{a,b \leq N\atop am=bn}1\\
    &\ge\frac{T}{\log T}\sum_{m,n\in\M\atop{m}/{(m,n)},{n}/{(m,n)}\le N}\frac{N}{\max\{\frac{m}{(m,n)},\frac{n}{(m,n)}\}}\\
    &\ge\frac{T}{\log T}\sum_{m,n\in\M\atop{m}/{(m,n)},{n}/{(m,n)}\le N}\frac{N}{\sqrt{2\frac{m}{(m,n)}\frac{n}{(m,n)}}}\\
    &\gg\frac{NT}{\log T}\sum_{m,n\in\M\atop{{[m,n]}}/(m,n)\le N^2/2}{\sqrt{\frac{(m,n)}{[m,n]}}}.
\end{align*}
For the inner GCD sum, we have
\begin{align*}
\sum_{m,n\in\M\atop{{[m,n]}}/(m,n)\le N^2/2}{\sqrt{\frac{(m,n)}{[m,n]}}}
&=\bigg(\sum_{m,n\in\M
}\sqrt{\frac{(m,n)}{[m,n]}}-\sum_{m,n\in\M
\atop [m,n]/(m,n)> N^2/2}\sqrt{\frac{(m,n)}{[m,n]}}\bigg)\\
&\gg|\M|\exp\bigg((2\sqrt2+o(1))\sqrt{\frac{\log(T/N)\log_3(T/N)}{\log_2(T/N)}}\bigg),
\end{align*}
since
\begin{align*}\sum_{m,n\in\M
\atop [m,n]/(m,n)> N^2/2}\sqrt{\frac{(m,n)}{[m,n]}}&\ll N^{-2\eta}\sum_{m,n\in\M
}\bigg({\frac{(m,n)}{[m,n]}}\bigg)^{\frac12-\eta}\\
&\ll N^{-2\eta}\prod_{p\le y_\M}\bigg(1+\frac{2}{p^{\frac12-\eta}-1}\bigg)\\
&\ll N^{-2\eta}\exp\big( y_\M^{\frac12+\eta}\big)\\
&\ll N^{-2\eta}\exp\big( (\log (T/N))^{\frac12+\eta+o(1)}\big)\\
&\ll \exp\big(-\tfrac23\delta(\log T)^{\frac12+\delta}\big)\exp\big((\log T)^{\frac12+\frac{2}{3}\delta}\big)\\
&\ll1,
\end{align*}
with $\eta=\delta/3$, $y_\M:=\max_{m\in\M} P(m)\le(\log (T/N))^{1+o(1)}$ and $N>\exp((\log T)^{\frac12+\delta}).$
So we  get
$$I_2(R, T)\gg \frac{NT}{\log T}|\M|\exp\bigg((2\sqrt2+o(1))\sqrt{\frac{\log(T/N)\log_3(T/N)}{\log_2(T/N)}}\bigg).$$
And finally by \eqref{max2} we have
 \begin{align*}
 &\max_{1 \le |t| \le T} |S_t(N)|^2\\&\gg \frac{\log T}{T |\mcm|} \frac{NT}{\log T}|\M|\exp\bigg((2\sqrt2+o(1))\sqrt{\frac{\log(T/N)\log_3(T/N)}{\log_2(T/N)}}\bigg)+O(N\log T)\\
 &=N\exp\bigg((2\sqrt2+o(1))\sqrt{\frac{\log(T/N)\log_3(T/N)}{\log_2(T/N)}}\bigg).
 \end{align*}
Thus we complete the proof.
	\section*{Acknowledgements}
	Z. Dong is supported by the Shanghai Magnolia Talent Plan Pujiang Project (Grant No. 24PJD140) and the National
	Natural Science Foundation of China (Grant No. 	1240011770). W. Wang is supported by the National
	Natural Science Foundation of China (Grant No. 12500). H. Zhang is supported by the Fundamental Research Funds for the Central Universities (Grant No. 531118010622), the National
	Natural Science Foundation of China (Grant No. 1240011979) and the Hunan Provincial Natural Science Foundation of China (Grant No. 2024JJ6120).
	
	\normalem


\begin{thebibliography}{99}
		
			







\bibitem{Ais} Aistleitner, C. {\emph Lower bounds for the maximum of the Riemann zeta function along vertical lines}, {\it Math. Ann.}, {\bf 365} (2016), 473--496.

%\bibitem{ABS} Aistleitner, C., Berkes, I., and Seip, K. {\emph GCD sums from Poisson integrals and systems of dilated functions}, {\it J. Eur. Math. Soc.}, {\bf 17} (2015), 1517--1546.

\bibitem{BN} Benatar, J. and Nishry, A. {\emph Extremal bounds for Dirichlet polynomials with random multiplicative coefficients}, {\it Stud. Math.}, {\bf 2023}, 59--80.

\bibitem{BS16}  Bondarenko, A., and  Seip, K. \emph{Helson’s problem for sums of a random multiplicative function}, Mathematika \textbf{62} (2016), 101--110.

\bibitem{BS17} Bondarenko, A., Seip, K. {\emph Large greatest common divisor sums and extreme values of the Riemann zeta function}, {\it Duke Math. J.}, {\bf 166} (2017), 1685--1701.

\bibitem{BMT} de la Bret\`eche, R., Munsch, M., and Tenenbaum, G. {\emph Small G\'al sums and applications}, {\it J. Lond. Math. Soc. (2)}, {\bf 103} (2021), no.~1, 336--352.

%\bibitem{BS2} Bondarenko, A., Seip, K. {\emph Extreme values of the Riemann zeta function and its argument}, {\it  Math. Ann.}, {\bf 372} (2018), 999--1015.

\bibitem{BT} de la Bret\`{e}che, R., Tenenbaum, G. {\emph Sommes de G\'{a}l et applications (in French) [G\'{a}l-type sums and applications]}, {\it Proc. Lond. Math. Soc.}, {\bf 119} (2019), 104--134.

\bibitem{DWZ} Dong, Z., Wang, W., Zhang, H. {\emph Large zeta sums}, {\it Period. Math. Hung.}, 1--11
%\bibitem{GS01} Granville, A., Soundararajan K. {\emph Large character sums}, {\it J. Amer. Math. Soc.}, {\bf 14} (2001), 365--397.

%\bibitem{Harman} Harman, G. {\emph Metric Number Theory}, London Mathematical Society Monographs, New Series, 18, The Clarendon Press, Oxford University Press, New York, 1998.

\bibitem{Harper23} Harper, A. J. {\emph The typical size of character and zeta sums is $o(\sqrt{x})$}, {arXiv:2301.04390}.

\bibitem{Helson} Helson, H.  \emph{Hankel forms and sums of random variables}, Studia Mathematica \textbf{176} (2006), 85--92.
        
\bibitem{HIL} Hilberdink, T. {\emph An arithmetical mapping and applications to results for the Riemann zeta function}, {\it Acta Arith.}, {\bf 139} (2009), 341--367.
 
\bibitem{Hou} Hough, B. {\emph The resonance method for large character sums}, {\it  Mathematika},  {\bf 59},(2013), 87--118

%\bibitem{Koksma} Koksma, J.~F. {\emph On a certain integral in the theory of uniform distribution}, {\it Nederl. Akad. Wetensch. Proc. Ser. A}, {\bf 54} = {\it Indagationes Math.}, {\bf 13} (1951), 285--287.

%\bibitem{LR} Lewko, M., Radziwi\l l, M. {\emph Refinements of G\'al's theorem and applications}, {\it Adv. Math.}, {\bf 305} (2017), 280--297.

%\bibitem{Mun} Munsch, M.  {\emph The maximum size of short character sums}, {\it Ramanujan J.} {\bf 53} (2020), 27–-38.

\bibitem{Sound} Soundararajan, K. {\emph Extreme values of zeta and $L$-functions}, {\it Math. Ann.}, {\bf 342} (2008), 67--86. 

\bibitem{Vo} Voronin, S. M. {\emph Lower bounds in Riemann zeta-function theory}, {\it Izv. Akad. Nauk SSSR Ser. Mat.}, {\bf 52} (1988), 882--892.
        
\bibitem{XY} Xu, M. W., Yang, D. {\emph Extreme values of Dirichlet polynomials with multiplicative coefficients}, {\it J. Number Theory}, {\bf 258} (2024), 173--180.

\end{thebibliography}
\end{document}